% from parity_f2.tex (02.10.2013): version submitted to Bulletin of the LMS
% v2a: Revised version (following referee's report)  [ v2 is first draft of the same]

\documentclass[a4paper,11pt]{amsart}
\usepackage{amsthm, amsmath, amssymb,amsbsy,graphicx,mathrsfs,enumerate, bm}
\usepackage[vcentermath, enableskew]{youngtab}
\usepackage[margin=3cm]{geometry}

\def\wid1{15.7cm}

%\numberwithin{equation}{section}
\numberwithin{table}{section}

%\newcolumntype{I}{!{\vrule width 1.2pt}}

\newlength\savedwidth

\setcounter{totalnumber}{5}
\setcounter{topnumber}{5}
\setcounter{bottomnumber}{5}

\usepackage{caption}
\DeclareCaptionLabelSeparator{period}{. }
\captionsetup{labelsep=period}

\newtheorem{thm}{Theorem}
\newtheorem{lem}[thm]{Lemma}
\newtheorem{cor}[thm]{Corollary}

\newtheorem{defi}[thm]{Definition}

\theoremstyle{definition}
\newtheorem{remark}[thm]{Remark}
\newtheorem{exam}{Example}[section]

\newcommand{\mZ}{\mathbb{Z}}
\newcommand{\mF}{\mathbb{F}}

\newcommand{\mN}{\mathbb{N}}

\renewcommand{\rm}{\mathrm}

\newcommand{\mbf}{\mathbf}

\newcommand{\thra}{\twoheadrightarrow}

\newcommand{\sm}{\setminus}

\newcommand{\lda}{\lambda}

\newcommand{\fr}{\mathfrak}

\newcommand{\scr}{\mathscr}

\newcommand{\ol}{\overline}

\renewcommand{\a}{\alpha}
\renewcommand{\b}{\beta}
\newcommand{\e}{\epsilon}
\newcommand{\g}{\gamma}
\renewcommand{\d}{\delta}
\renewcommand{\k}{\kappa}

\renewcommand{\fr}{\mathfrak}

\newcommand{\lan}{\langle}
\newcommand{\ran}{\rangle}

\newcommand{\Char}{\operatorname{char}}

\newcommand{\Lda}{\Lambda}

\newcommand{\cl}{\mathcal}

\newcommand{\mtt}{\mathtt}

\newcommand{\qdim}{\operatorname{qdim}}

\title[On graded decomposition numbers]{On graded decomposition numbers for cyclotomic Hecke algebras in quantum characteristic 2}
\author{Anton Evseev}
\address{School of Mathematics, University of Birmingham, Edgbaston, Birmingham B15 2TT, UK}
\email{a.evseev@bham.ac.uk}\subjclass[2010]{20C08, 20C30.}

% \date{} 

\begin{document}

\begin{abstract}
 Brundan and Kleshchev introduced graded decomposition numbers for representations of cyclotomic Hecke algebras of type $A$, which include group algebras of symmetric groups. 
Graded decomposition numbers are certain Laurent polynomials, whose values at $1$ are the usual decomposition numbers. 
We show that in quantum characteristic $2$ every such polynomial has non-zero coefficients either only in odd or only in even degrees. As a consequence, we find the first examples of graded decomposition numbers of symmetric groups with non-zero coefficients in some negative degrees.
\end{abstract}

\maketitle

\section{Introduction}\label{sec:intro}

The problem of finding decomposition numbers for representations of a symmetric group $\fr S_d$ in positive characteristic $p$ is old and, despite important advances, largely unresolved. The decomposition numbers are the nonnegative integers $[S(\mu): D(\lda)]$ that are multiplicities of simple modules $D(\lda)$ in composition series of Specht modules $S(\mu)$, where $\mu$ is a partition of $d$ and $\lda$ is a  $p$-restricted partition of $d$. 
In 2009, Brundan and Kleshchev~\cite{BrundanKleshchev2009a} found a natural $\mZ$-grading on the group algebra $\mF_p \fr S_d$, which comes from an isomorphism of that algebra with a quotient of a KLR algebra defined by Khovanov and Lauda~\cite{KhovanovLauda2009} and independently by Rouquier~\cite{Rouquier2008}. By~\cite{BrundanKleshchev2009, BrundanKleshchevWang2011}, both Specht modules and simple modules of $\mF_p \fr S_d$ are naturally graded, and therefore one may consider graded decomposition numbers $[S(\mu): D(\lda)]_q \in \mN[q,q^{-1}]$. The usual decomposition numbers $[S(\mu): D(\lda)]$ are then obtained by specialising $q$ to $1$. Clearly, these discoveries add a lot more structure to the problem. 

In this note we focus on the case $p=2$. Under this assumption, 
we show that each graded module $S(\mu)$ 
has non-zero components either only in odd or only in even degrees (Theorem~\ref{thm:1}). We deduce that each Laurent polynomial $[S(\mu):D(\lda)]_q$ has non-zero coefficients either only in odd or only in even powers of $q$
(Corollary~\ref{cor:1}). In both cases, the relevant parities are given by simple combinatorial rules. As a consequence, we give some new information on ungraded decomposition numbers (Corollary~\ref{cor:ungr}). All these results extend to the more general setting of cyclotomic Hecke algebras. 

It has been suggested that all graded decomposition numbers of $\mF_p \fr S_d$ might have zero coefficients in negative powers of $q$, which is equivalent to saying that graded adjustment matrices (defined by Eq.~\eqref{eq:dec} below) have integer entries: cf.\ remarks in~\cite[\S 5.6]{BrundanKleshchev2009} and~\cite[\S 10.3]{Kleshchev2010}. Using the aforementioned results, we find examples showing that this is not the case when $p=2$ (Corollary~\ref{cor:notinZ}). 

Section~\ref{sec:prelim} is a brief review of definitions and results about graded cyclotomic Hecke algebras $H_d^{\Lda}$ and their Specht modules. Generally, we follow the notation of~\cite{Kleshchev2010}, which is a much more detailed survey of the subject. 
The new results are in Section~\ref{sec:results}.

\section{Graded cyclotomic Hecke algebras}\label{sec:prelim}

The set of nonnegative integers is denoted by $\mN$. 
The arguments of this note work only in quantum characteristic $2$, so we specialise the  definitions to that case. 
Set $I=\mZ/2\mZ = \{0,1\}$. We will consider cyclotomic KLR algebras associated with the quiver
\[
 0 \leftrightarrows 1.
\]
The corresponding Cartan matrix is the $I\times I$-matrix 
\[
 A= (a_{ij})_{i,j\in I} = 
\begin{pmatrix}
  2 & -2 \\
  -2 & 2
 \end{pmatrix}.
\]
Consider a realisation of $A$ (see~\cite[\S 1.1]{Kac1990}) with the simple coroots 
$\a_0^{\vee}$ and $\a_1^{\vee}$ and 
weights $\Lda_0$ and $\Lda_1$ such that $(\a_i^{\vee}, \Lda_j)=\d_{ij}$. 
Fix $l\in \mN$ and an $l$-tuple
$\k = (k_1,\ldots,k_l) \in I^l$. Set $\Lda = \Lda_{k_1} + \cdots + \Lda_{k_l}$. 

Let $F$ be an arbitrary field and fix $d\in \mN$.
The cyclotomic Khovanov--Lauda--Rouquier (KLR) algebra $H_d^{\Lda}$ is the $F$-algebra defined by the generators 
\begin{equation}\label{eq:gens}
 \{ e(\mbf i) \mid i \in I^d \} \cup \{ y_1,\ldots, y_d \} \cup \{\psi_1,\ldots,\psi_{d-1} \}
\end{equation}
subject to a lengthy list of relations given in~\cite{BrundanKleshchev2009a} (see also, for example,~\cite{Kleshchev2010}). In particular, the relations ensure that the elements $e(\mbf i)$, $\mbf i\in I^d$, are pairwise orthogonal idempotents summing to $1$. 
The algebra $H^{\Lda}_d$ is the quotient of a KLR algebra (see~\cite{KhovanovLauda2009, Rouquier2008}) by a ``cyclotomic relation''. 

By~\cite[Corollary 1]{BrundanKleshchev2009a}, 
$H_d^{\Lda}$ may be graded as follows: $\deg(e(\mbf i))=0$, $\deg(y_r)=2$, 
\[
\deg(\psi_r e(\mbf i)) = -a_{i_r,i_{r+1}} = \begin{cases}
                                             -2 & \text{if } i_r=i_{r+1}; \\
					      2 & \text{if } i_r\ne i_{r+1}. 
                                            \end{cases}
\]

It is proved in~\cite{BrundanKleshchev2009a} that
$H_d^{\Lda}$ is isomorphic to the corresponding cyclotomic Hecke algebra (also denoted by $H_d^{\Lda}$), which means that the latter is naturally graded.
In particular, $H_d^{\Lda_0}$ is isomorphic to $F \fr S_d$ if $\Char F = 2$ and to the usual Iwahori--Hecke algebra $\cl H_d (-1)$ if $\Char F =0$.

We now consider graded Specht modules, which were first constructed in~\cite{BrundanKleshchevWang2011} and may also be defined as the cell modules of the graded cellular algebra structure on $H_d^{\Lda}$ that was found in~\cite{HuMathas2010}. The Specht modules $S(\lda)$ of $H^{\Lda}_d$ are parameterised by \emph{$l$-multipartitions} of size $d$, that is, by $l$-tuples 
$\lda= (\lda^{(1)},\ldots,\lda^{(l)} )$ of partitions such that $\sum_m |\lda^{(m)}| = d$. 
The set of such $l$-multipartitions is denoted by $\scr{P}^{\k}_d$.  
The Young diagram of an $l$-multipartition $\lda$ is the set
\[
 Y(\lda) = \{ (a,b,m) \in \mZ_{>0} \times \mZ_{>0} \times \{ 1,\ldots, l \} \mid 1\le b\le \lda_a^{(m)} \},
\]
where $\lda_a^{(m)}$ is the $a$-th part of $\lda^{(m)}$. 
The elements of $Y(\lda)$ are called the \emph{nodes} of $\lda$, and more generally a \emph{node} is an element of $\mZ_{>0} \times \mZ_{>0} \times \{1,\ldots, l\}$. 

A $\lda$-tableau is a bijective assignment of the integers $1,\ldots,d$ to the nodes of $\lda$. A $\lda$-tableau $\mtt t$ is said to be \emph{standard} if, whenever $1\le m\le l$ and $(a,b,m)$, $(a',b',m)$ are nodes of $\lda$ such that $a\le a'$ and $b\le b'$, the entry of $\mtt t$ in $(a,b,m)$ is less than or equal to the one in $(a',b',m)$. 
The set of all standard $\lda$-tableaux is denoted by $\scr T(\lda)$. 
The \emph{residue} of a node $(a,b,m)$ of a tableau $\mtt t$ is defined to be
$k_m + (b-a) +2\mZ \in I$; a node of residue $i$ is said to be an \emph{$i$-node}.
The \emph{residue sequence} of $\mtt t$ is the sequence $\mbf i =(i_1,\ldots, i_d)\in I^r$
where $i_r$ is the residue of the node occupied by $r$ in $\mtt t$. 

We say that a node $N\in Y(\lda)$ is \emph{removable} if $Y(\lda) \sm \{ N\}$ is the Young diagram of a multipartition; a node $N'\notin Y(\lda)$ is \emph{addable} for $\lda$ if $Y(\lda) \cup \{ N' \}$ is the Young diagram of a multipartition. 
If $(a,b,m)$ and $(a',b',m')$ are two nodes, we say that the latter is 
\emph{below} the former if either $m'>m$ or $m'=m$ and $a'>a$. 

Let $\mtt t$ be a standard tableau and let $N$ be the node occupied by $d$ in $\mtt t$. Define
\[
 d_N (\lda) = \# \{ \text{addable } i\text{-nodes for } \lda \text{ below } N \} 
- \# \{ \text{removable } i\text{-nodes of } \lda \text{ below } N \}.
\]
The \emph{degree} of the standard tableau $\mtt t$ is defined recursively by 
\[
 \deg(\mtt t) = d_N (\lda) + \deg(\mtt t_{\le (d-1)}),
\]
where $t_{\le r}$ denotes the standard tableau consisting of the boxes occupied by $1,\ldots,r$ in $\mtt t$  and the degree of the empty multipartition $( \varnothing, \ldots,\varnothing )$ is set to be $0$. 
It is proved in~\cite{BrundanKleshchevWang2011} 
that the Specht module $S(\lda)$ is naturally graded and has a certain homogeneous basis
$\{ z_{\mtt t} \}_{\mtt t\in \scr{T}(\lda) }$ with the degree of
$z_{\mtt t}$ equal to $\deg (\mtt t)$.

If $M = \oplus_{n\in \mZ} M_n$ is the decomposition into components of a finite-dimensional graded vector space $M$, the graded dimension of $M$ is defined to be the polynomial
\[
 \qdim (M) = \sum_{n\in \mZ} (\dim M_n) q^n \in \mZ [q, q^{-1}].
\]
The aforementioned facts give a combinatorial description of $\qdim S(\lda)$: 
\begin{equation}\label{eq:qdimSlda}
 \qdim S(\lda) = \sum_{\mtt t \in \scr{T}(\lda)} q^{\deg(\mtt t)}
\end{equation}
for any multipartition $\lda$.

\section{Parity results}\label{sec:results}

Our starting point is the observation that, while the odd-degree components of $H_d^{\Lda}$ are all zero, this is not necessarily the case for the Specht modules $S(\lda)$. To give more detail, we need to introduce some further notation. 

We are mainly concerned with the \emph{parity} of degrees, thus projecting $\mZ$-gradings to $\mZ/2\mZ$-gradings.
Let $S$ be the ring $\mZ[q,q^{-1}]/(q^2-1)\mZ[q,q^{-1}]$, and let $u=q+(q^2-1)\mZ[q,q^{-1}]\in S$. 
As an abelian group, $S=\mZ 1 \oplus \mZ u$, and we identify $\mZ$ with the subring $\mZ1$ of $S$. 
We have a map $\mZ[q,q^{-1}] \thra S$ given by $q\mapsto u$.
The image of any $f\in \mZ[q,q^{-1}]$ is denoted by $f(u)$. 
The fact that the odd-degree components of $H_d^{\Lda}$ are $0$ may then be expressed as follows:
\begin{equation}\label{eq:algeven}
(\qdim H^{\Lda}_d) (u) \in \mN.
\end{equation}

For a partition $\lda=(\lda_1,\ldots,\lda_n)$, define its \emph{parity} $\e(\lda)\in \mZ/2\mZ$ by
\[
 \e(\lda) = \sum_{i=1}^n \left\lfloor \frac{\lda_i}{2} \right\rfloor + 2\mZ, 
\]
where $\lfloor \g \rfloor$ denotes the integer part of a rational number $\g$. 
For $i\in I$, let $n_i (\lda)$ denote the number of $i$-nodes in the Young diagram of $\lda$.
 For a multipartition 
$\lda=(\lda^{(1)}, \ldots,\lda^{(l)} )$, define 
\begin{equation}\label{eq:defeps}
\e(\lda) = \e(\lda^{(1)}) + \cdots + \e(\lda^{(l)}) + \sum_{j=1}^{l-1} \sum_{m=j+1}^{l} n_{k_m} (\lda^{(j)}) + 2\mZ,
\end{equation}
where $\k = (k_1,\ldots,k_l) \in I^l$ is the tuple fixed in Section~\ref{sec:prelim}. 
In the case when $l=1$, the last term is $0$, so the two definitions agree.

%In the rest of the note, we assume that $\Lda=\Lda_0$ (for now). 

For any $\lda\in \scr{P}_d^{\k}$, let $\mtt t^{\lda}$ be the standard $\lda$-tableau in which the numbers $1,\ldots,d$ are filled along successive rows of $\lda$, from top to bottom.
%Using the aforementioned definition of the degree of a standard tableau, it is easy to see that
%\begin{equation}\label{eq:elda}
% \e(\lda) = \deg(\mtt t^{\lda}) + 2\mZ.
%\end{equation}

\begin{lem}\label{lem:tlda}
 For any $\lda \in \scr{P}_d^{\k}$, we have $\e(\lda) = \deg(\mtt t^{\lda}) + 2\mZ$. 
\end{lem}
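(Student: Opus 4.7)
The plan is to proceed by induction on $d = |\lda|$, with a trivial base case $d = 0$ (both sides zero). For the inductive step, let $l'$ be the largest index with $\lda^{(l')} \ne \varnothing$, let $a$ be the number of parts of $\lda^{(l')}$, and set $b := \lda^{(l')}_a$ and $N := (a, b, l')$. By construction, $\mtt t^{\lda}$ places $d$ at $N$; the node $N$ is removable from $\lda$, and if $\mu$ denotes the multipartition obtained by removing $N$, then $\mtt t^{\lda}_{\le d-1} = \mtt t^{\mu}$. Combining the recursive definition of $\deg$ with the inductive hypothesis, the claim reduces to the single congruence
\[
\e(\lda) - \e(\mu) \equiv d_N(\lda) \pmod{2}.
\]

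Let $i := k_{l'} + b - a + 2\mZ \in I$ denote the residue of $N$. For the left-hand side, since $\lda^{(m)} = \mu^{(m)}$ for $m \ne l'$, the single-partition terms of~\eqref{eq:defeps} contribute $\lfloor b/2 \rfloor - \lfloor (b-1)/2 \rfloor$ when $b \ge 2$, and $\lfloor 1/2 \rfloor = 0$ when $b = 1$; in either case this is $\equiv b + 1 \pmod{2}$. The double sum in~\eqref{eq:defeps} changes only via $j = l'$, and that change equals $\sum_{m > l'}\bigl( n_{k_m}(\lda^{(l')}) - n_{k_m}(\mu^{(l')}) \bigr) = \#\{m : l' < m \le l,\ k_m = i\}$, because removing $N$ decreases $n_i(\lda^{(l')})$ by one and leaves $n_{1-i}(\lda^{(l')})$ unchanged. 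Hence $\e(\lda) - \e(\mu) \equiv (b+1) + \#\{m : l' < m \le l,\ k_m = i\} \pmod{2}$. For the right-hand side, rows below row $a$ of $\lda^{(l')}$ and every row of $\lda^{(m)}$ for $m > l'$ are empty, so $\lda$ has no removable nodes below $N$; its addable nodes below $N$ are precisely $(a+1, 1, l')$ together with $(1, 1, m)$ for each $m > l'$. The residue of $(a+1, 1, l')$ is $k_{l'} - a$, which equals $i$ modulo $2$ exactly when $b$ is even, while $(1, 1, m)$ is an $i$-node iff $k_m = i$. Thus $d_N(\lda) = [b \text{ even}] + \#\{m : l' < m \le l,\ k_m = i\}$, agreeing modulo $2$ with the expression above.

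The substance of the argument is the parity coincidence between the jump $\lfloor b/2 \rfloor - \lfloor (b-1)/2 \rfloor$ produced by deleting the row-end cell and the indicator that the newly exposed addable node $(a+1, 1, l')$ inherits the residue $i$; I expect the sole point requiring care to be the boundary case $b = 1$, where the entire $a$-th row disappears upon removal of $N$, and where one must also verify that $(a+1, 1, l')$ automatically has residue different from $i$, so that the formulas on both sides remain consistent.
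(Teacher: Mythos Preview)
Your proof is correct and takes essentially the same approach as the paper: both compute $d_N$ for the node at the end of a row (finding the contributions from the addable node $(a+1,1,l')$ and from the addable nodes $(1,1,m)$ with $m>l'$) and match these to the corresponding terms in the formula~\eqref{eq:defeps} for $\e(\lda)$. The only difference is presentational: the paper sums these contributions over all $r\in\{1,\dots,d\}$ in one stroke, whereas you phrase the same computation as an induction on $d$, handling only the final node explicitly and invoking the inductive hypothesis for the rest.
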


\begin{proof}
 Let $r\in \{1,\ldots, d\}$, and let $N=(a,b,j)$ be the node occupied by $r$ in $\mtt t^{\lda}$. Let $i$ be the residue of $N$. 
Then  $\mtt t^{\lda}_{\le r}$ has no removable nodes below $N$. The addable nodes below $N$ for $\mtt t^{\lda}_{\le r}$ are $(a+1,1,j)$ and the nodes $(1,1,m)$ for $j<m\le l$. 
Hence, $d_N (t^{\lda}_{\le r} )=\b + \sum_{m=j+1}^l \d_{k_m, i}$
where
\[
 \b=\begin{cases}
     1 & \text{if } b \text{ is even}, \\
     0 & \text{if } b \text{ is odd}. 
    \end{cases}
\] 
Summing over all $j\in \{1,\ldots, d\}$, we see that $\deg(\mtt t^{\lda})$ modulo $2$ is given by the expression on the right-hand side of~\eqref{eq:defeps}. 
\end{proof}

Note that $u^m\in S$ is well-defined for any $m\in \mZ/2\mZ$.

\begin{thm}\label{thm:1}
 For any $\lda \in \scr{P}_d^{\k}$,
\[
 \qdim S(\lda) (u)  \in \mN u^{\e(\lda)}. 
\]
\end{thm}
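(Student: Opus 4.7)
The plan is to prove by induction on $d$ the stronger statement that $\deg(\mtt t) \equiv \e(\lda) \pmod 2$ for every $\mtt t \in \scr{T}(\lda)$. Since~\eqref{eq:qdimSlda} expresses $\qdim S(\lda)$ as a sum of monomials $q^{\deg(\mtt t)}$ with non-negative integer coefficients, this parity statement immediately yields $\qdim S(\lda)(u) = (\dim S(\lda))\,u^{\e(\lda)} \in \mN u^{\e(\lda)}$. For the inductive step, let $N=(a,b,m)$ be the node occupied by $d$ in $\mtt t$, with residue $i := k_m+b-a+2\mZ$, and set $\mu = \lda \setminus \{N\}$. Using $\deg(\mtt t) = d_N(\lda)+\deg(\mtt t_{\le d-1})$ and the inductive hypothesis on $\mtt t_{\le d-1}$, everything reduces to the tableau-independent identity
\[
 d_N(\lda) \equiv \e(\lda) - \e(\mu) \pmod 2.
\]

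Since removing $N$ affects only the $m$-th component of $\lda$, definition~\eqref{eq:defeps} combined with the elementary fact $\lfloor x/2\rfloor - \lfloor (x-1)/2\rfloor = [x\text{ even}]$ yields
\[
 \e(\lda)-\e(\mu) \equiv [b\text{ even}] + \#\{m'>m : k_{m'}=i\} \pmod 2,
\]
with $[\cdot]$ denoting the Iverson bracket. On the other side, I will split the $i$-addable and $i$-removable nodes of $\lda$ lying below $N$ into those in a later component $m'>m$ and those in component $m$ at a row $>a$. For each $m'>m$, the standard weight formula $(\#\text{addable }i\text{-nodes})-(\#\text{removable }i\text{-nodes}) = \d_{i,k_{m'}} - 2n_i(\lda^{(m')}) + 2n_{1-i}(\lda^{(m')})$ for a single partition (the $h_i$-eigenvalue on $V(\Lda_{k_{m'}})$, or a direct count) reduces modulo $2$ to $\d_{i,k_{m'}}$, and summing over $m'>m$ matches the cross-term above.

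The heart of the proof is therefore the single-partition identity $g_i(\pi,a) \equiv [b\text{ even}] \pmod 2$, where $g_i(\pi,a)$ denotes the addable-minus-removable $i$-node count at rows strictly below $a$ of a partition $\pi$ with $\pi_a = b$ and $(a,b)$ removable of residue $i$. Writing $\pi'_s := \pi_{a+s}$, row $a+s$ contributes an addable $i$-node precisely when $\pi'_{s-1} > \pi'_s$ and $\pi'_s \equiv b+s+1 \pmod 2$, and a removable $i$-node precisely when $\pi'_s > \pi'_{s+1}$, $\pi'_s > 0$ and $\pi'_s \equiv b+s \pmod 2$. Re-indexing the removable sum $s\mapsto s-1$ and combining contributions at the same row $t\ge 2$ should collapse $g_i(\pi,a) \pmod 2$ to $[\pi'_1\equiv b\pmod 2] + \#\{t\ge 2 : \pi'_{t-1}\not\equiv\pi'_t \pmod 2\}$. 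Since $\pi'_s$ is eventually zero, the parity-change count in $\pi'_1,\pi'_2,\ldots$ equals $\pi'_1 \bmod 2$, and a two-case analysis in the parity of $b$ reduces the total to $[b\text{ even}]$. This re-indexing/telescoping reduction is the only delicate point; the rest is essentially mechanical.
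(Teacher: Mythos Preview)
Your proposal is correct, but it takes a genuinely different route from the paper. The paper avoids your inductive combinatorics entirely: it computes $\deg(\mtt t^{\lda})\pmod 2$ only for the row-reading tableau $\mtt t^{\lda}$ (Lemma~\ref{lem:tlda}; this is easy because below the last-filled node of $\mtt t^{\lda}_{\le r}$ there are no removable nodes and very few addable ones), and then observes that for any standard $\mtt t$ the difference $\deg(\mtt t)-\deg(\mtt t^{\lda})$ equals the degree of an element $\psi_{r_1}\cdots\psi_{r_m}\, e(\mbf i^{\lda})\in H^{\Lda}_d$ by~\cite[Corollary~3.14]{BrundanKleshchevWang2011}, hence is even since in quantum characteristic~$2$ every generator of $H^{\Lda}_d$ has even degree (this is precisely~\eqref{eq:algeven}). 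Your argument instead establishes $d_N(\lda)\equiv\e(\lda)-\e(\mu)\pmod 2$ for \emph{every} removable node $N$ by a direct parity-change count; the telescoping you describe does go through (the point where the indicator $[\pi'_{t-1}>\pi'_t]$ disappears is that $\pi'_{t-1}\not\equiv\pi'_t$ already forces strict inequality, the parts being weakly decreasing). The paper's proof is shorter and more conceptual, leaning on the even grading of the algebra and a cited result; yours is longer but purely combinatorial and self-contained, needing only the recursive definition of $\deg(\mtt t)$ and~\eqref{eq:qdimSlda}, and in particular it subsumes Lemma~\ref{lem:tlda} rather than using it as a separate input.
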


\begin{proof}
Let $\mtt t$ be any standard $\lda$-tableau. Let $w_{\mtt t} \in \fr S_d$ be the element such that 
$w_{\mtt t} (\mtt t^{\lda})= \mtt t$, where $\fr S_d$ acts on $\scr{T}(\lda)$ by permuting the entries $\{1,\ldots, d\}$. 
Let $w_{\mtt t} = s_{r_1} \cdots s_{r_m}$ be a reduced decomposition of $w_{\mtt t}$ with respect to the Coxeter generators $s_r =(r,r+1)$ of $\fr S_d$.  
By~\cite[Corollary 3.14]{BrundanKleshchevWang2011}, 
\[
\deg(\mtt t) - \deg(\mtt t^{\lda}) = \deg(\psi_{r_1} \ldots \psi_{r_m} e(\mbf i^{\lda}) ),
\] 
where $\mbf i^{\lda} \in I^d$ is the residue sequence of $\mtt t^{\lda}$.
Due to~\eqref{eq:algeven}, it follows that $\deg(\mtt t) \equiv \deg(\mtt t^{\lda}) \pmod 2$ for any
$\mtt t\in \scr T(\lda)$. Hence, by Lemma~\ref{lem:tlda}, $\e(\lda)$ is the parity of $\deg(\mtt t)$. The result follows. 
\end{proof}

The simple $H_d^{\Lda}$-modules $D(\lda)$ are indexed by the set $\scr{RP}^{\k}_d$ of \emph{$\k$-restricted multipartitions}, which is a certain subset of $\scr{P}^{\k}_d$ (see~\cite{BrundanKleshchev2009a, Kleshchev2010}). 
If $\Lda=\Lda_0$ then $\scr{RP}^{\k} = \scr{RP}^{(0)}$ is precisely the set of \emph{$2$-restricted} partitions, i.e.\ partitions $\lda=(\lda_1,\ldots,\lda_n)$ such that $\lda_r-\lda_{r+1}\le 1$ for all $r$. In the general case, the only known combinatorial description of $\scr{RP}^{\k}_d$ is a recursive one. 

By~\cite{BrundanKleshchev2009, HuMathas2010}, each simple module $D(\lda)$ is uniquely graded in such a way that it is isomorphic (as a graded module) to its dual with respect to the anti-automorphism of $H^{\Lda}_d$ fixing each of the generators~\eqref{eq:gens}. As a consequence, 
$\qdim D(\lda) = \ol{\qdim D(\lda)}$, where we write
$\bar{f} (q) = f(q^{-1})$ for any $f\in \mZ[q,q^{-1}]$. Moreover, it is known that $D(\lda)$ is isomorphic to the head of $S(\lda)$ as a graded module if the gradings are as described
(see \cite[Theorem 5.10]{BrundanKleshchev2009} or~\cite{HuMathas2010}).
Hence, Theorem~\ref{thm:1} implies the following result. 
 
\begin{cor}\label{cor:0}
 For any $\lda\in \scr{RP}_d^{\k}$, we have $\qdim D(\lda) (u) \in \mN u^{\e(\lda)}$.
\end{cor}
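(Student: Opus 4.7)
The plan is to derive Corollary~\ref{cor:0} directly from Theorem~\ref{thm:1}, using only the structural fact recalled in the paragraph immediately preceding the statement. Specifically, for $\lda\in\scr{RP}_d^{\k}$ the simple module $D(\lda)$ is, after the normalisation of grading that makes it isomorphic to its own dual, the head of the graded Specht module $S(\lda)$. In particular there is a surjective homomorphism of graded $H_d^{\Lda}$-modules $S(\lda)\thra D(\lda)$.

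Once such a graded surjection is in hand, each homogeneous component $D(\lda)_n$ is a quotient of $S(\lda)_n$, so $D(\lda)_n=0$ whenever $S(\lda)_n=0$. Theorem~\ref{thm:1} says precisely that $S(\lda)_n=0$ unless $n+2\mZ=\e(\lda)$. Combining these two observations, only degrees $n$ with $n+2\mZ=\e(\lda)$ contribute to $\qdim D(\lda)$, which is exactly the assertion $\qdim D(\lda)(u)\in \mN u^{\e(\lda)}$.

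I do not expect a genuine obstacle: the argument reduces to checking that the grading on $D(\lda)$ appearing in Corollary~\ref{cor:0} is the same as the grading making $D(\lda)$ the graded head of $S(\lda)$, and this is exactly the content of the cited results \cite[Theorem 5.10]{BrundanKleshchev2009} and~\cite{HuMathas2010}. Note that the self-duality $\qdim D(\lda)=\overline{\qdim D(\lda)}$ plays no role in the parity statement itself; it merely imposes the additional, orthogonal constraint that the Laurent polynomial $\qdim D(\lda)$ is palindromic about $q^0$.
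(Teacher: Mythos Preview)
Your proposal is correct and is exactly the argument the paper intends: the paragraph preceding the corollary records that $D(\lda)$ is the graded head of $S(\lda)$, and the paper simply says ``Hence, Theorem~\ref{thm:1} implies the following result.'' Your write-up spells out the one-line deduction (a graded surjection forces $D(\lda)_n=0$ whenever $S(\lda)_n=0$) and correctly observes that the self-duality is only used to pin down the grading normalisation, not for the parity conclusion itself.
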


If the field $F$ is chosen so that its characteristic is $p\ge 0$ and $\lda\in \scr{RP}^{\k}_d$, $\mu\in \scr{P}^{\k}_d$, let 
$d_{\lda\mu}^{(p)} = [S(\lda): D(\mu)]_q \in \mN[q,q^{-1}]$ be the corresponding 
\emph{graded decomposition number} (see~\cite[\S 2.4]{BrundanKleshchev2009}). 
Thus, by definition, $d_{\lda\mu}^{(p)} = \sum_{n\in \mZ} b_n q^n$ where $b_n$ is the 
multiplicity of $D(\mu)\lan n \ran$ in the graded composition series of $S(\lda)$; here, 
$D(\mu)\lan n \ran$ is the module obtained from $D(\mu)$ by shifting all degrees by $n$. 
Then 
$D^{(p)} = (d_{\lda\mu}^{(p)})_{\lda,\mu}$ 
is the graded decomposition matrix of $H^{\Lda}_d$. It is known that $D^{(p)}$ depends only on $p$, not on the choice of $F$ (see~\cite[Section 6]{BrundanKleshchev2009a}). 

By~\cite[Theorem 5.17]{BrundanKleshchev2009}, we have
\begin{equation}\label{eq:dec}
 D^{(p)} = D^{(0)} A^{(p)}
\end{equation}
for a certain \emph{graded adjustment matrix} 
$A^{(p)}=(a_{\lda\mu}^{(p)})_{\lda,\mu\in \scr{RP}^{\k}_d }$, and the entries $a_{\lda\mu}^{(p)}$ have nonnegative coefficients. Also, 
$\ol{a_{\lda,\mu}^{(p)}} = a_{\lda,\mu}^{(p)}$ for all $\lda,\mu$. 
Further, by Theorems 3.19 and 5.15 of~\cite{BrundanKleshchev2009}, we have
$d_{\lda\mu}^{(0)} \in \d_{\lda\mu} + q\mN[q]$.

\begin{cor}\label{cor:1}
 Let $\lda\in \scr{P}^{\k}_d$ and $\mu\in \scr{RP}_d^{\k}$. 
Let $p$ be either a prime or $0$. Then
\begin{enumerate}[(i)]
 \item $d_{\lda\mu}^{(p)} (u) \in \mN u^{\e(\lda)+\e(\mu)}$.
 \item If $\lda\in \scr{RP}^{\k}_d$, then $a_{\lda\mu}^{(p)} (u) \in \mN u^{\e(\lda)+\e(\mu)}$.
\end{enumerate}
\end{cor}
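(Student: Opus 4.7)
\emph{Plan.} My strategy is to project everything from $\mZ[q,q^{-1}]$ to the ring $S=\mZ[q,q^{-1}]/(q^2-1)$ via $q\mapsto u$, so that the $\mZ$-grading picture becomes a $\mZ/2\mZ$-grading picture in which Theorem~\ref{thm:1} and Corollary~\ref{cor:0} make the graded dimensions of $S(\lda)$ and $D(\mu)$ pure in parity. In $S$ (where $u^2=1$), every element has the form $x_0+x_1 u$, and lying in $\mN u^e$ amounts to $x_{1-e}=0$. Combined with the nonnegativity of the coefficients of $d^{(p)}_{\lda\mu}$ and $a^{(p)}_{\nu\mu}$ recalled just before the statement, this will let the vanishing of a ``wrong-parity'' sum of nonnegative terms force each summand to vanish.

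For part (i), I would project to $S$ the graded composition-series identity
\[
 \qdim S(\lda) \;=\; \sum_{\mu \in \scr{RP}^{\k}_d} d^{(p)}_{\lda\mu}(q)\,\qdim D(\mu),
\]
which is immediate from the definition of graded decomposition numbers. Corollary~\ref{cor:0} gives $\qdim D(\mu)(u)=c_\mu u^{\e(\mu)}$ with $c_\mu>0$, and Theorem~\ref{thm:1} places the left-hand side in $\mN u^{\e(\lda)}$. Collecting the coefficient of the wrong-parity monomial $u^{\e(\lda)+1}$ turns the right-hand side into a single sum of nonnegative multiples of the wrong-parity coefficients of the $d^{(p)}_{\lda\mu}(u)$; this sum must vanish, and positivity of each $c_\mu$ then forces the wrong-parity coefficients individually to vanish, giving (i).

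For part (ii), I would apply the same manoeuvre to
\[
 d^{(p)}_{\lda\mu} \;=\; \sum_{\nu\in \scr{RP}^{\k}_d} d^{(0)}_{\lda\nu}\, a^{(p)}_{\nu\mu}
\]
coming from $D^{(p)}=D^{(0)}A^{(p)}$. Part~(i) applied at $p=0$ yields $d^{(0)}_{\lda\nu}(u)\in\mN u^{\e(\lda)+\e(\nu)}$, and combining this with $d^{(0)}_{\nu\nu}\in 1+q\mN[q]$ forces $d^{(0)}_{\nu\nu}(u)$ to be a \emph{positive} integer in $S$: its $u$-component must vanish by (i), leaving a nonnegative constant that contains the contribution $1$ from the constant term of $d^{(0)}_{\nu\nu}$. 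Now fix $\nu,\mu\in\scr{RP}^{\k}_d$, specialise the displayed identity at $\lda=\nu$, and extract the coefficient of $u^{\e(\nu)+\e(\mu)+1}$. By (i) it vanishes on the left, while on the right it is a sum of nonnegative terms whose $\nu'=\nu$ summand equals $d^{(0)}_{\nu\nu}(u)$ times the wrong-parity coefficient of $a^{(p)}_{\nu\mu}(u)$. Positivity of $d^{(0)}_{\nu\nu}(u)$ then kills that coefficient of $a^{(p)}_{\nu\mu}(u)$, which is exactly the desired conclusion and requires no induction on $\nu$.

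I do not anticipate a serious obstacle. The delicate point, and the only place where the argument could fail, is passing from the vanishing of a single sum to the vanishing of each summand: this uses essentially the nonnegativity of the coefficients of both $d^{(p)}_{\lda\mu}$ and $a^{(p)}_{\nu\mu}$, so the entire approach rests on the positivity results stated just before the corollary. Without them the parity identities would yield only a $\mZ$-valued constraint, which is too weak.
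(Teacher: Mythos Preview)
Your proposal is correct and follows essentially the same route as the paper: for (i) you project the graded composition-series identity to $S$, invoke Theorem~\ref{thm:1} and Corollary~\ref{cor:0}, and use nonnegativity of the coefficients of $d^{(p)}_{\lda\mu}$ to kill the wrong-parity part term by term; for (ii) you pick out the diagonal summand in $D^{(p)}=D^{(0)}A^{(p)}$ and use its positivity together with nonnegativity of the $a^{(p)}_{\nu\mu}$. The only cosmetic difference is that the paper, using $d^{(0)}_{\lda\lda}=1$ (hence $d^{(0)}_{\lda\lda}(u)=1$), writes the conclusion of (ii) more tersely as $d^{(p)}_{\lda\mu}(u)\in a^{(p)}_{\lda\mu}(u)+\mN[u]$ and reads off the contradiction directly, whereas you argue only that $d^{(0)}_{\nu\nu}(u)$ is a positive integer and then use the vanishing of a nonnegative sum; both are valid and amount to the same idea.
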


\begin{proof}
 (i) Write $\qdim D(\mu)(u) = m u^{\e(\mu)}$, so that $m>0$. 
It follows from the definition of $D^{(p)}$ that 
\[
 \qdim S(\lda) (u) = \sum_{\nu \in \scr{RP}_d^{\k}} d_{\lda\nu}^{(p)} (u) \qdim D(\nu) (u).
\]
Therefore, if $d_{\lda\mu}^{(p)} (u)$ has a non-zero coefficient in $u^{1+\e(\lda)+\e(\mu)}$, then $\qdim S(\lda) (u)$ has coefficient at least $m$ in $u^{1+\e(\lda)}$, contradicting Theorem~\ref{thm:1}.

(ii) If $a_{\lda\mu}^{(p)} (u)$ has a non-zero coefficient in $u^{1+\e(\lda)+\e(\mu)}$, then 
so does $d_{\lda\mu}^{(p)} (u)$: indeed, since $d_{\lda\lda}^{(0)} (u)=1$, Eq.~\eqref{eq:dec} gives
\[
 d_{\lda\mu}^{(p)} (u) \in a_{\lda\mu}^{(p)} (u) + \mN[u].
\]
So (ii) follows from (i). 
\end{proof}

We have the following consequence for ungraded adjustment matrices and ungraded dimensions of simple modules. 

\begin{cor}\label{cor:ungr}
Let $\lda,\mu\in \scr{RP}^{\k}_d$. 
 If $\e(\lda)\ne \e(\mu)$, then $a_{\lda\mu}^{(p)}(1)$ is even.
If $\e(\lda) =1$, then $\qdim D(\lda) (1)$ is even.
\end{cor}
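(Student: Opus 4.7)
The plan is to combine Corollaries \ref{cor:0} and \ref{cor:1}(ii) with the bar-invariance properties already recorded in the excerpt. Both statements in Corollary \ref{cor:ungr} are ultimately of the same shape: a Laurent polynomial $f\in\mZ[q,q^{-1}]$ is known (a) to have non-zero coefficients only in odd powers of $q$, and (b) to be bar-invariant, i.e.\ $\ol f = f$; we want to conclude $f(1)\in 2\mZ$.

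First I would record the elementary observation that if $f=\sum_n b_n q^n$ is bar-invariant and every $b_n$ with $n$ even vanishes, then $b_n = b_{-n}$ for all $n$ and $b_0=0$, so
\[
 f(1) = \sum_{n\in\mZ} b_n = \sum_{n>0,\ n\ \text{odd}} (b_n + b_{-n}) = 2 \sum_{n>0,\ n\ \text{odd}} b_n \in 2\mZ.
\]

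For the first assertion of Corollary \ref{cor:ungr}, the hypothesis $\e(\lda) \ne \e(\mu)$ gives $\e(\lda)+\e(\mu) = 1 + 2\mZ$, so Corollary \ref{cor:1}(ii) yields $a_{\lda\mu}^{(p)}(u) \in \mN u$. Hence $a_{\lda\mu}^{(p)}$ has zero coefficients in every even power of $q$. Combined with the bar-invariance $\ol{a_{\lda\mu}^{(p)}} = a_{\lda\mu}^{(p)}$ recalled just before Corollary \ref{cor:1}, the observation above shows $a_{\lda\mu}^{(p)}(1)$ is even.

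For the second assertion, the hypothesis $\e(\lda)=1$ together with Corollary \ref{cor:0} gives $\qdim D(\lda)(u) \in \mN u$, so $\qdim D(\lda)$ has zero coefficients in every even power of $q$. Since $D(\lda)$ is self-dual up to grading shift, $\qdim D(\lda)$ is bar-invariant (as recalled in the paragraph preceding Corollary \ref{cor:0}), and the same elementary observation gives $\qdim D(\lda)(1) \in 2\mZ$. There is no real obstacle here: once the parity/bar-invariance ingredients are in hand, the proof is a one-line parity counting argument, so the corollary is essentially a formal consequence of the results already proved.
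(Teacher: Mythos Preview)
Your proof is correct and follows essentially the same route as the paper: use Corollary~\ref{cor:1}(ii) (resp.\ Corollary~\ref{cor:0}) to see that the relevant Laurent polynomial has vanishing constant term, then invoke bar-invariance to pair up the remaining coefficients and conclude that the value at $1$ is even. The only difference is cosmetic---you spell out the elementary ``bar-invariant with no constant term $\Rightarrow$ even at $1$'' observation explicitly, whereas the paper leaves it as a one-line deduction.
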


\begin{proof}
If $\e(\lda)\ne \e(\mu)$ then $a_{\lda\mu}^{(p)}$ has no constant term by Corollary~\ref{cor:1}(ii). Since
$\ol{a_{\lda\mu}^{(p)}} = a_{\lda\mu}^{(p)}$, it follows that $a_{\lda\mu}^{(p)} (1)$ is even. 
The second assertion is proved by the same argument applied to the polynomial $\qdim D(\lda)$, using Corollary~\ref{cor:0}.
\end{proof}

Now we specialise to the case $\Lda = \Lda_0$ and $p=2$, so that $H^{\Lda}_d \cong F \fr S_d$. 

\begin{cor}\label{cor:notinZ} Assume that $\Lda=\Lda_0$.
 There exist pairs $(\lda,\mu)$ of $2$-restricted partitions such that 
$a_{\lda\mu}^{(2)} \notin \mZ$. 
\end{cor}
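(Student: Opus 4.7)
The plan is to combine Corollary~\ref{cor:1}(ii) with the bar-invariance of the adjustment matrix entries and then exhibit an explicit non-zero entry whose row and column indices have different parities.

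First I would observe that, since $\overline{a_{\lda\mu}^{(2)}} = a_{\lda\mu}^{(2)}$, the Laurent polynomial $a_{\lda\mu}^{(2)}$ belongs to $\mZ$ if and only if it is a constant. So to prove the corollary it suffices to find a pair $(\lda,\mu)$ of $2$-restricted partitions for which $a_{\lda\mu}^{(2)}$ is non-zero and non-constant. By Corollary~\ref{cor:1}(ii), whenever $\e(\lda) \ne \e(\mu)$, the polynomial $a_{\lda\mu}^{(2)}$ has non-zero coefficients only in odd powers of $q$; combined with $\overline{a_{\lda\mu}^{(2)}} = a_{\lda\mu}^{(2)}$ and non-negativity of the coefficients, any such non-zero $a_{\lda\mu}^{(2)}$ is automatically non-constant (its top and bottom degrees are non-zero integers of opposite sign), hence does not lie in $\mZ$.

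Thus the corollary reduces to finding $\lda,\mu \in \scr{RP}^{(0)}_d$ for some $d$ satisfying $\e(\lda) \ne \e(\mu)$ and $a_{\lda\mu}^{(2)}(1) \ne 0$. Concretely, $a_{\lda\mu}^{(2)}(1)$ is a (classical) entry of the ungraded $2$-adjustment matrix for $F\fr S_d$, and it is known from explicit computations of $2$-modular decomposition matrices of symmetric groups (in the literature and available databases, the smallest non-identity adjustment entries occur for moderate $d$) that such entries exist. I would pick one such known example and verify both hypotheses directly: (i) that the adjustment entry is non-zero, by quoting the relevant computation, and (ii) that the parities $\e(\lda)$ and $\e(\mu)$ disagree, which is a short combinatorial check using the formula $\e(\nu) = \sum_i \lfloor \nu_i/2 \rfloor + 2\mZ$ from Section~\ref{sec:results}.

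The main obstacle is really just the existence part: one needs a documented pair $(\lda,\mu)$ of strict partitions at the same level $d$ where the $p=2$ adjustment entry is non-zero and, simultaneously, the parities $\sum_i \lfloor \lda_i/2 \rfloor$ and $\sum_i \lfloor \mu_i/2 \rfloor$ differ modulo $2$. Once such a pair is identified (which is a finite search among tabulated decomposition matrices), the corollary follows at once from the bar-invariance argument above. Since the statement only asserts existence, no extra structural work is required beyond exhibiting the example and performing the two verifications.
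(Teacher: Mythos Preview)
Your approach is correct and essentially identical to the paper's: reduce to finding a non-zero ungraded adjustment entry $a_{\lda\mu}^{(2)}(1)$ with $\e(\lda)\ne\e(\mu)$, then invoke Corollary~\ref{cor:1}(ii) and bar-invariance. The paper completes the argument by exhibiting explicit examples from the tables in~\cite[Appendix~B]{Mathas1999}, the smallest being $\lda=(3,2^2,1)$, $\mu=(1^8)$ with $a_{\lda\mu}^{(2)}(1)=2$; you should supply such a concrete pair (and note that ``strict'' should read ``$2$-restricted'').
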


\begin{proof}
 If $\e(\lda)\ne \e(\mu)$ and $a_{\lda\mu}^{(2)} (1) \ne 0$, then, by Corollary~\ref{cor:1}, 
$a_{\lda\mu}^{(2)}(u)$ is a positive multiple of $u$, which implies that 
$a_{\lda\mu}^{(2)}\notin \mZ$.
 Checking the tables of ungraded adjustment matrices in~\cite[Appendix B]{Mathas1999}, we find the following examples of pairs $(\lda,\mu)$ satisfying the conditions that 
$\e(\lda)\ne \e(\mu)$ and $a_{\lda\mu}^{(2)} (1) \ne 0$:
\[
 \begin{split}
   \lda = (3, 2^2, 1), & \; \mu = (1^8); \\ 
   \lda=(3, 2^2, 1^2), & \; \mu =(1^9); \\
   \lda=(3^2, 2, 1^2), & \; \mu=(1^{10}); \\
   \lda = (5,2^2, 1), & \; \mu = (3,1^7).
 \end{split}
\]
In each of these cases, $a_{\lda\mu}^{(2)}(1) =2$. 
(There are no such pairs $(\lda,\mu)$ for $d\le 7$.) 
\end{proof}

\begin{remark}\label{rem:qneg}
 Whenever $a_{\lda\mu}^{(p)} \notin \mZ$, we have $d_{\lda\mu}^{(p)} \notin \mZ[q]$. This follows from~\eqref{eq:dec} and the positivity properties quoted just after that equation.  
\end{remark}

\begin{remark}\label{rem:values}
Both Andrew Mathas and Sin{\'e}ad Lyle (personal communications) have found arguments showing that,
in fact,  
\begin{equation}\label{eq:rem}
 d_{(3,2^2,1), (1^8)}^{(2)}  = a_{(3,2^2, 1), (1^8)}^{(2)}  = q+q^{-1}.
\end{equation}
The first equality immediately follows from~\eqref{eq:dec} 
together with the known values of $D^{(0)}$ and the
ungraded version of $A^{(2)}$, as given in~\cite[Appendix B]{Mathas1999}. 
We reproduce Mathas's proof of the second equality here. 
By the proof of Corollary~\ref{cor:notinZ}, $a_{(3,2^2,1), (1^8) }  = q^m +q^{-m}$ 
for some $m>0$.
Assume that $\Char F=2$, and let $\mbf i = (0,1,0,1,0,1,0,1)$.
Since the trivial module $D(1^8)$ satisfies $e(\mbf i) D(1^8) = D(1^8)$, the graded vector space
$e(\mbf i) S(3,2^2,1)$ must have a non-zero component of degree $-m$. 
Recall that $\{ z_{\mtt t} \}_{\mtt t\in \scr{T}(3,2^2,1) }$ is a basis of $S(3,2^2,1)$. If $\mtt t$ is a standard tableau, we have (\cite[Lemma 4.4]{BrundanKleshchevWang2011})
\[
 e(\mbf i) z_{\mtt t} = \begin{cases}
                         z_{\mtt t} &\text{if } \mtt t \text{ has residue sequence } \mbf i,\\
                         0 & \text{otherwise.}
 \end{cases}
\]
 The only standard $(3,2^2,1)$-tableaux with residue sequence $\mbf i$ are as follows.
\[
 \young(123,47,58,6) \qquad
 \young(145,27,38,6) \qquad 
 \young(147,25,36,8) \qquad
 \young(147,25,38,6)
\]
These have degrees $1,1,1,-1$ respectively. It follows that $m=1$, so~\eqref{eq:rem} holds.
 
By a similar argument one shows that
\[
 a_{(3,2^2,1^2), (1^9)}^{(2)} = a^{(2)}_{(3^2,2, 1^2), (1^{10})} = q+q^{-1}.
\]
\end{remark}

\begin{remark}
 Mathas has extended Corollary~\ref{cor:notinZ} by showing that the entries of the graded adjustment matrices of $H^{\Lda_0}_{d}$ in quantum characteristic $3$ defined over a field of characteristic $2$ do not all belong to $\mZ$; see~\cite[Example 3.7.13]{Mathas2013}.
\end{remark}

\section*{Acknowledgements}
 The author is grateful to Sin{\'e}ad Lyle and Andrew Mathas for supplying proofs of the equalities in Remark~\ref{rem:values} and other helpful comments.

%\bibliographystyle{plain}
%\bibliography{fingroups}

\begin{thebibliography}{1}

\bibitem{BrundanKleshchev2009a}
J.\ Brundan and A.\ Kleshchev,
\newblock Blocks of cyclotomic {H}ecke algebras and {K}hovanov-{L}auda
  algebras.
\newblock {\em Invent.\ Math.} \textbf{178} (2009), no.\ 3, 451--484.

\bibitem{BrundanKleshchev2009}
J.\ Brundan and A.\ Kleshchev,
\newblock Graded decomposition numbers for cyclotomic {H}ecke algebras.
\newblock {\em Adv.\ Math.} \textbf{222} (2009), no.\ 6, 1883--1942.

\bibitem{BrundanKleshchevWang2011}
J.\ Brundan, A.\ Kleshchev, and W.\ Wang,
\newblock Graded {S}pecht modules.
\newblock {\em J.\ Reine Angew.\ Math.} \textbf{655} (2011), 61--87.

\bibitem{HuMathas2010}
J.\ Hu and A.\ Mathas,
\newblock Graded cellular bases for the cyclotomic
  {K}hovanov-{L}auda-{R}ouquier algebras of type {A}.
\newblock {\em Adv.\ Math.} \textbf{225} (2010), no.\ 2, 598--642.

\bibitem{Kac1990}
V.G.\ Kac,
\newblock {\em Infinite-dimensional {L}ie algebras}, 3 ed.
\newblock Cambridge University Press, Cambridge, 1990.

\bibitem{KhovanovLauda2009}
M.\ Khovanov and A.D.\ Lauda,
\newblock {A} diagrammatic approach to categorification of quantum groups.\
  {I}.
\newblock {\em Represent. Theory} \textbf{13} (2009), 309--347.

\bibitem{Kleshchev2010}
A.\ Kleshchev,
\newblock Representation theory of symmetric groups and related {H}ecke
  algebras.
\newblock {\em Bull.\ Amer.\ Math.\ Soc.}, \textbf{47} (2010), 419--481.

\bibitem{Mathas1999}
A.\ Mathas,
\newblock {\em {I}wahori-{H}ecke algebras and Schur algebras of the symmetric
  group}.
\newblock American Mathematical Society, Providence, RI, 1999.

\bibitem{Mathas2013}
A.\ Mathas,
\newblock {\em Cyclotomic quiver {H}ecke algebras of type $A$}.
\newblock {A}r{X}iv: 1310.2142v1 (2013). 

\bibitem{Rouquier2008}
R.\ Rouquier,
\newblock 2-{K}ac-{M}oody algebras.
\newblock {A}r{X}iv: 0812.5023 (2008).

\end{thebibliography}

\end{document}